
\documentclass[letterpaper, 10 pt, conference]{ieeeconf}  

\IEEEoverridecommandlockouts                              

\overrideIEEEmargins                                      




\usepackage{amstext,amssymb,amsmath,amsbsy}
\usepackage{hyperref}
\usepackage{graphics} 
\usepackage{epsfig} 
\usepackage{mathptmx} 
\usepackage{times} 
\usepackage{amsmath} 
\usepackage{amssymb}  

\usepackage{cleverref}
\usepackage{cases}
\usepackage{color}

\usepackage{tikz}

\usepackage{amsmath,amsfonts,amssymb,epsfig, textcomp}
\usepackage{tikz}
\usepackage{amscd}
\usepackage{amsfonts}

\usepackage{amsmath}

\usepackage{indentfirst}
\usepackage{verbatim}

\usepackage{enumerate}

\usepackage[OT1]{fontenc}
\usepackage[latin1]{inputenc}
\usepackage[english]{babel}
\usepackage{amssymb}

\newtheorem{theorem}{Theorem}[section]
\newtheorem{lemma}{Lemma}[section]

\newtheorem{proposition}{Proposition}[section]

\newtheorem{remark}{Remark}[section]

\newcommand{\cA}{{\mathcal A}}

\newcommand{\mN}{\mathbb{N}}

      \newcommand{\eps}{\varepsilon}
      \newcommand{\mR}{\mathbb{R}}

 \newcommand{\cB}{{\mathcal B}}

      \newcommand{\mC}{\mathbb{C}}

      \newcommand{\dsp}{\displaystyle}

      \makeatletter
      \def\@setcopyright{}
      \def\serieslogo@{}
      \makeatother
   \newcommand{\tr}{^\mathsf{T}}

\newcommand{\cK}{{\mathcal K}}
\newcommand{\D}{{\mathcal T}}

\newcommand{\B}{\mathrm{B}}

\title{\LARGE \bf
Null-controllability, exact controllability, and stabilization  of hyperbolic systems for the optimal time
}

\author{Jean-Michel Coron$^{1}$ and Hoai-Minh Nguyen$^{2}$
\thanks{$^{1}$ Jean-Michel Coron is with Sorbonne Universit\'{e}s, UPMC Univ Paris 06, UMR 7598, Laboratoire
Jacques-Louis Lions, 4 place Jussieu, F-75252, Paris, France
{\tt \small coron@ann.jussieu.fr.}}
\thanks{$^{2}$Hoai-Minh Nguyen is  with the Mathematics Department, Ecole Polytechnique F\'ed\'erale de Lausanne, EPFL,  SB MATHAA CAMA, Station 8,  CH-1015 Lausanne, Switzerland
        {\tt\small hoai-minh.nguyen@epfl.ch}}%
}

\begin{document}

\maketitle
\thispagestyle{empty}
\pagestyle{empty}

\begin{abstract}

In this paper, we discuss our recent works on  the null-controllability, the exact controllability, and the stabilization of linear hyperbolic systems in one dimensional space using boundary controls on one side for the optimal time. Under precise and generic assumptions on the boundary conditions on the other side, we first obtain the optimal time for the null and the exact controllability for these systems for a generic source term. We then  prove the null-controllability and the exact controllability for any time greater than the optimal time and for any source term.  Finally, for homogeneous systems, we design feedbacks which stabilize the systems and bring them to the zero state at the optimal time. Extensions for the non-linear homogeneous system are also discussed.

\end{abstract}

\section{Introduction}

Linear hyperbolic systems in one dimensional space are frequently used
in modeling of many systems such as traffic flow, heat exchangers,  and fluids in open channels. The
stability and boundary stabilization of these hyperbolic systems
have been studied intensively in the literature, see,  e.g.,   \cite{BC16} and the references therein. In this paper, we are interested in  the null-controllability, the exact controllability, and the stabilization at finite time of linear hyperbolic systems in one dimensional space using boundary controls on one side.  More precisely, we consider the system, for
$ (t, x)  \in \mR_+ \times (0, 1)$,
\begin{equation}\label{Sys-1}
\partial_t w (t, x) =  \Sigma(x) \partial_x w (t, x) + C(x) w(t, x).
\end{equation}
Here $w = (w_1, \cdots, w_n)\tr : \mR_+ \times (0, 1) \to \mR^n$ ($n \ge 2$),   $\Sigma$ and $C$ are   $(n \times n)$ real matrix-valued functions defined in $[0,1]$. We assume that for every $x \in [0, 1]$,  $\Sigma(x)$ is diagonal with $m \ge 1$  distinct positive eigenvalues,  and $k = n - m \ge 1$  distinct negative eigenvalues. Using Riemann coordinates, one might assume that $\Sigma(x)$ is of the form
\begin{equation}\label{form-A}
\Sigma(x) = \mbox{diag} \big(- \lambda_1(x), \cdots, - \lambda_{k}(x),  \lambda_{k+1}(x), \cdots,  \lambda_{n}(x) \big),
\end{equation}
where
\begin{equation}\label{relation-lambda}
-\lambda_{1}(x) < \cdots <  - \lambda_{k} (x)< 0 < \lambda_{k+1}(x) < \cdots < \lambda_{k+m}(x).
\end{equation}
Throughout the paper, we assume that
\begin{equation}\label{cond-lambda}
\mbox{$\lambda_i$  is Lipschitz on $[0, 1]$  for $1 \le i \le n \,  (= k + m)$.}
\end{equation}
We also assume that
\begin{equation}
C \in \big( L^\infty([0, 1]) \big)^{n \times n}.
\end{equation}
We are interested in the following type of boundary conditions and boundary controls. The boundary conditions at $x = 0$ are given by, for $t \ge 0$,
\begin{equation}\label{bdry-0}
(w_1, \cdots, w_k)\tr (t, 0)  = B(w_{k+1}, \cdots, w_{k+m})\tr (t, 0)
\end{equation}
for some $(k \times m)$ real {\it constant} matrix $B$, and the boundary controls  at $x = 1$ are, for $t \ge 0$,
\begin{equation}\label{bdry-1}
w_{k+1}(t, 1) = W_{k+1}(t),  \quad \dots, \quad w_{k + m}(t, 1) = W_{k+m}(t),
\end{equation}
where $W_{k +1}, \dots, W_{k + m}$ are controls.

Let us recall that the control system \eqref{Sys-1}, \eqref{bdry-0}, and \eqref{bdry-1} is null-controllable (resp. exactly controllable) at the time $T>0$ if, for every initial data $w_0: (0,1)\to \mathbb{R}^n$ in $[L^2(0,1)]^n$ (resp. for every initial data $w_0: (0,1 )\to \mathbb{R}^n$ in $[L^2(0,1)]^n$  and for every (final) state $w_T:  (0,1 )\to \mathbb{R}^n$  in $[L^2(0,1)]^n$), there is a control $W=(W_{k+1},\ldots,W_{k+m})\tr :(0,T)\to \mathbb{R}^m$ in $[L^2(0,T)]^m$ such that the solution
of \eqref{Sys-1}, \eqref{bdry-0}, and \eqref{bdry-1} satisfying $w(t=0,x)=w_0(x)$ vanishes (resp. reaches $w_T$) at the time $T$: $w(t=T,x)=0$ (resp. $w(t = T, x)= w_T(x)$). Similar definitions hold for $w_0 \in \big[L^\infty(0, 1) \big]^n$ (resp. $w_0, w_T \in  \big[L^\infty(0, 1) \big]^n$) with $W \in \big[L^\infty(0, T) \big]^m$.
Set
\begin{equation}\label{def-tau}
\tau_i :=  \int_{0}^1 \frac{1}{\lambda_i(\xi)}  \, d \xi  \mbox{ for } 1 \le i \le n.
\end{equation}

The exact controllability, the null-controllability, and the boundary stabilization problem of  hyperbolic system in one dimension  have been widely investigated in the literature for almost half a century. The pioneer works date back to Rauch and Taylor \cite{RT74} and Russell \cite{Russell78}.   In particular, it was shown, see \cite[Theorem 3.2]{Russell78},  that the system \eqref{Sys-1}, \eqref{bdry-0}, and \eqref{bdry-1} is null-controllable if for the time
$$
T \ge \tau_k + \tau_{k+1},
$$
and is exact controllable if $k=m$ and $B$ is invertible. The extension of  this result for quasilinear system was initiated by
Greenberg and Li \cite{GL84} and Slemrod \cite{Slemrod83}.

Concerning the stabilisation of \eqref{Sys-1}, many articles are based on the boundary conditions with the following specific form
\begin{equation}\label{bdry-S}
\left(\begin{array}{c}
    w_- (t, 0) \\
    w_+ (t, 1)
  \end{array}\right)
= G
 \left( \begin{array}{c}
    w_+(t, 1) \\
    w_- (t, 0)
  \end{array} \right),
\end{equation}
where $G: \mR^n \to \mR^n$ is a suitable smooth vector field. Three approaches have been proposed to deal with \eqref{bdry-S}.  The first one is based on the characteristic method.  This method was previously investigated in Greenberg and Li \cite{GL84} for $2 \times 2$ systems and Qin \cite{Qin85} (see also Li \cite{Li94})  for a generalization to $n \times n$ homogeneous nonlinear hyperbolic systems in the framework of $C^1$-norm. The second approach is based on Lyapunov functions see
\cite{BCN07, CBN08} (see also \cite{LS02, BCN99}). The third approach \cite{CoronNg15} is based on the study of delay equations works for $W^{2,p}$-norm with $p \ge 1$. These works typically impose restrictions on the magnitude of the coupling coefficients.

This restriction was overcome via backstepping approach. This was first proposed by Coron et al. \cite{CVKB13} for $2\times 2$ system $(m=k=1)$.  Later this approach has been extended and now can be applied for general pairs $(m,k)$, see \cite{MVK13, HMVK16, AM16, CHO17}.  In \cite{CVKB13}, the null-controllability is achieved via a feedback law for the time $\tau_1+\tau_2$  with $m = k = 1$ via backstepping approach.   In \cite{HMVK16}, the authors considered the case where $\Sigma$ is constant and   obtained feedback laws for the null-controllability at the time
\begin{equation}\label{def-T1}
T_1 : = \tau_k + \sum_{l = 1}^m \tau_{k + l}.
\end{equation}
 It was  later showed in \cite{AM16, CHO17} that one can reach the null-controllability at the time
\begin{equation}\label{def-T2}
T_2 := \tau_k + \tau_{k+1}.
\end{equation}

Set
\begin{equation}\label{def-Top}
T_{opt} := \left\{ \begin{array}{r}  \dsp \max \big\{ \tau_1 + \tau_{m+1}, \dots, \tau_k + \tau_{m+k}, \\[6pt] \tau_{k+1} \big\}  \mbox{ if } m \ge k, \\[6pt]
\dsp \max \big\{ \tau_{k+1-m} + \tau_{k+1}, \tau_{k+2-m} + \tau_{k+2},  \\[6pt]  \dots,
\tau_{k} + \tau_{k+m} \big\}   \mbox{ if } m < k.
\end{array} \right.
\end{equation}
In this paper, we report our recent works \cite{CoronNg19, CoronNg19-2, CoronNg20,CoronNg20-L} on  the null-controllability, the exact controllability, and the stabilization of \eqref{Sys-1}, \eqref{bdry-0}, and \eqref{bdry-1}. We show that
the null-controllability holds at $T_{opt}$ for generic $B$ and $C$ and the null-controllability holds for any $T> T_{opt}$ under a precise condition on $B$ ($B \in {\mathcal B}$ given in \eqref{def-B}), which holds for almost every matrix $k \times m$ matrix  $B$.  Similar conclusions holds for the exact controllability (with $B \in {\mathcal B}_e$ given in \eqref{def-Be}) under  natural, additional condition $m \ge k$. When the system is homogeneous,  we show that the null-controllability is achieved via a time-independent feedback and there are Lyapunov's functions associated with these feedbacks.  This result also holds for quasilinear setting.  The starting point of our approach in the inhomogeneous case is the backstepping approach.

\begin{remark} \rm The backstepping approach for the control of partial differential equations was pioneered by  Miroslav Krstic and his coauthors (see \cite{Krstic08} for a concise introduction).  The backstepping method is now frequently used for various control problems modeling by partial differential equations in one dimension. For example, it has been also used to stabilize the  wave equation  \cite{KGBS08, SK09, SCK10}, the parabolic equations in \cite{SK04,SK05},  nonlinear parabolic equations  \cite{Vazquez08},  and to  obtain the null-controllability of the heat equation \cite{CoronNg17}. The standard backstepping approach relies on the Volterra transform of the second kind. It is worth noting that, in some situations, more general transformations have to be  considered as for Korteweg-de Vries equations  \cite{CoronC13}, Kuramoto--Sivashinsky equations \cite{Coron15}, Schr\"{o}dinger's equation \cite{CoronGM18}, and hyperbolic equations with internal controls \cite{2019-Zhang-preprint}.
\end{remark}

\section{STATEMENT OF THE MAIN RESULTS}

Define
\begin{equation}\label{def-B}
{\mathcal B}: = \big\{B \in \mR^{k \times m}; \mbox{ \eqref{cond-B-1} holds for  $1 \le i \le  \min\{k, m-1\}$} \big\},
\end{equation}
and
\begin{equation}\label{def-Be}
{\mathcal B}_e: = \big\{B \in \mR^{k \times m}; \mbox{ \eqref{cond-B-1} holds for $1 \le i \le  k$} \big\},
\end{equation}
where
\begin{multline}\label{cond-B-1}
\mbox{ the $i \times i$  matrix formed from the last $i$} \\\mbox{ columns and the last $i$ rows of $B$  is invertible.}
\end{multline}

We first show that the system  \eqref{Sys-1}, \eqref{bdry-0}, and \eqref{bdry-1} is null-controllable for $B \in {\mathcal B}$ and is exact controllable for $B \in {\mathcal B}_e$ at the time $T_{opt}$ generically. More precisely, we have \cite[Theorem 1.1]{CoronNg19}:

\begin{theorem}\label{thm1} Assume that \eqref{relation-lambda} and \eqref{cond-lambda} hold.
We have
\begin{itemize}
\item[i)] for each $B \in {\mathcal B}$, outside a discrete set of $\gamma$ in $\mR$, the control system \eqref{Sys-1} with $C$ replaced by $\gamma C$, \eqref{bdry-0}, and \eqref{bdry-1} is null-controllable at the time $T_{opt}$.

\item[ii)] for each $\gamma$ outside a discrete set in $\mR$, outside a set of zero measure of $B$ in ${\mathcal B}$, the control system \eqref{Sys-1} with $C$ replaced by $\gamma C$, \eqref{bdry-0}, and \eqref{bdry-1} is null-controllable at the time $T_{opt}$.
\end{itemize}
\end{theorem}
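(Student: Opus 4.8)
The plan is to reduce the null-controllability of \eqref{Sys-1}, \eqref{bdry-0}, \eqref{bdry-1} to the vanishing of finitely many real-analytic functions of the parameter $\gamma$ (for part i)) or of the entries of $B$ (for part ii)), and then invoke the fact that a nonzero real-analytic function on a connected set has zero set that is discrete (resp.\ of measure zero). First I would set up the backstepping framework: using the invertibility hypotheses encoded in $B \in \mathcal B$ (condition \eqref{cond-B-1} for $1 \le i \le \min\{k,m-1\}$), one constructs a Volterra transform of the second kind $w \mapsto u = w - \int_0^x K(x,y) w(y)\, dy$ together with a feedback at $x=1$ so that the kernel equations (a system of transport PDEs on a triangular domain with boundary data dictated by $B$ and the $\lambda_i$) are solvable. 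This transforms \eqref{Sys-1} into a cascade of transport equations whose components die out, component by component, after the successive travel times; the obstruction to all components vanishing exactly at $T_{opt}$ is a finite collection of compatibility conditions.

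Next, I would make the dependence on $C$ (through $\gamma$) explicit. After the backstepping change of variables the target system is still driven by a source term built from $C$; whether the state vanishes at $T_{opt}$ for \emph{every} initial datum is equivalent to the invertibility (or, more precisely, to the triviality of the kernel) of a certain bounded operator on $[L^2(0,1)]^n$ — concretely, a finite-dimensional-in-spirit map obtained by propagating along characteristics and collecting the boundary couplings, i.e.\ a monodromy-type operator $\mathcal M(\gamma)$. The key structural point is that $\gamma \mapsto \mathcal M(\gamma)$ is entire (it is built from iterated integrals of $\gamma C$ along characteristics — a convergent power series in $\gamma$), and that at $\gamma = 0$ the homogeneous system is null-controllable at $T_{opt}$ by the sharp-time results recalled in the Introduction (cf.\ \cite{AM16, CHO17} and the optimal-time analysis), so $\mathcal M(0)$ is "good." Hence the set of bad $\gamma$ is the zero set of a nonzero real-analytic function (a suitable determinant / Fredholm determinant of $\mathcal M(\gamma)$), which is discrete in $\mR$. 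This gives part i).

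For part ii) I would run the same construction but now freeze a good $\gamma$ (one of those from i), or indeed use that for fixed generic $\gamma$ the analyticity argument applies in the $B$ variable) and track the dependence on the entries of $B$. The backstepping kernel and the resulting monodromy operator depend real-analytically on $B$ on the open set where the relevant square submatrices are invertible, i.e.\ on an open dense full-measure subset of $\mathcal B$; the null-controllability at $T_{opt}$ again fails only on the zero set of a nonzero real-analytic function of the $B$-entries, hence on a set of Lebesgue measure zero. One must check the function is not identically zero: this follows by exhibiting a single explicit $B$ (for instance one making the kernel equations decouple, or a perturbation of the $\gamma=0$ situation) for which null-controllability at $T_{opt}$ holds.

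The main obstacle I anticipate is not the analyticity bookkeeping but the \emph{optimality of the time} $T_{opt}$: one must show the feedback/backstepping construction can be arranged so that the last surviving characteristic component is extinguished exactly at $T_{opt}$ rather than at the larger times $T_1$ or $T_2$ from \eqref{def-T1}, \eqref{def-T2}. This requires a careful choice of which components are "fed back" into which — the role of the min$\{k,m-1\}$ in \eqref{def-B}, and the case split $m \ge k$ vs.\ $m<k$ in \eqref{def-Top}, reflect exactly this combinatorial optimization of how the $k$ incoming and $m$ outgoing families are coupled at the two ends. Getting the reduced operator $\mathcal M(\gamma)$ to genuinely control the state at $T_{opt}$, and verifying its analytic determinant is not identically zero, is the delicate heart of the argument; the discreteness/measure-zero conclusions are then immediate from analyticity.
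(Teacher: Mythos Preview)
Your overall architecture matches the paper's: backstepping to the target system \eqref{eq-u}, reduction of null-controllability at $T_{opt}$ to a Fredholm-type operator equation depending analytically on $\gamma$ (and on $B$), and then the analytic Fredholm alternative. The paper phrases the reduced problem as an equation $U + \mathcal K(\gamma)\,U = \mathcal F$ for the control, with $\mathcal K(\gamma)$ compact and analytic; your ``monodromy operator $\mathcal M(\gamma)$'' and its Fredholm determinant play the same role.

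There is, however, one concrete error in your base case. You invoke \cite{AM16,CHO17} to assert that for $\gamma=0$ the homogeneous system is already null-controllable at $T_{opt}$. Those references give the time $T_2 = \tau_k+\tau_{k+1}$ of \eqref{def-T2}, which is in general strictly larger than $T_{opt}$; they do \emph{not} supply the $\gamma=0$ anchor you need. In the paper's formulation this is a non-issue for a structural reason: when $C\equiv 0$ one may take the backstepping kernel $K\equiv 0$, hence $S\equiv 0$ in \eqref{def-S-*}, and the compact part $\mathcal K(0)$ vanishes identically, so $I+\mathcal K(0)=I$ is invertible without any appeal to prior controllability results. The genuine work, which you correctly flag in your last paragraph, is upstream of this: it is the derivation of the equation $U+\mathcal K U=\mathcal F$ itself, via characteristics, in such a way that its solvability is \emph{sufficient} for the state to vanish at exactly $T_{opt}$. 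That derivation is where the condition $B\in\mathcal B$ (the invertibility of the lower-right $i\times i$ blocks, $1\le i\le\min\{k,m-1\}$) and the case split in \eqref{def-Top} are actually used, and the paper defers its details to \cite{CoronNg19}. So your plan is right, but replace the appeal to \cite{AM16,CHO17} by the observation that the compact perturbation vanishes at $\gamma=0$, and recognize that the ``involved'' step is the characteristic-method derivation of the Fredholm equation at time $T_{opt}$, not the verification of the anchor point.
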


\begin{remark} \rm In the case $m = 1$, we can show that there exists a (linear) time independent feedback which yields the null-controllability at the time $T_{opt}$ for \eqref{Sys-1}, \eqref{bdry-0}, and \eqref{bdry-1} for all $B$, see \cite[Theorem 1.1]{CoronNg19}.
\end{remark}
\begin{remark} \rm In the case $m  = 2$,  we also established \cite[Theorem 1.1]{CoronNg19} the following result on the optimality of $T_{opt}$: If  $B \in {\mathcal B}$, $B_{k1} \neq 0$, $\Sigma$ is constant,  and  $(T_{opt} =  \tau_k + \tau_{k+2} = \tau_{k-1} + \tau_{k+1}$ if $k \ge 2$ and $T_{opt} =  \tau_1 + \tau_3 = \tau_2$ if $k = 1)$, then there exists a non-zero constant matrix $C$ such that the system is {\it not} null-controllable  at  the time  $T_{opt}$.
\end{remark}

Concerning the exact controllability, we have

\begin{theorem}\label{thm2}
Assume that $m \ge k \ge 1$,  \eqref{relation-lambda} and \eqref{cond-lambda} hold.  We have
\begin{itemize}
\item[i)] for each $B \in {\mathcal B}_e$, outside a discrete set of $\gamma$ in $\mR$, the control system \eqref{Sys-1} with $C$ replaced by $\gamma C$, \eqref{bdry-0}, and \eqref{bdry-1} is exactly controllable at the time $T_{opt}$.

\item[ii)] for each $\gamma$ outside a discrete set in $\mR$, outside a set of zero measure of $B$ in ${\mathcal B}_e$, the control system \eqref{Sys-1} with $C$ replaced by $\gamma C$, \eqref{bdry-0}, and \eqref{bdry-1} is exactly controllable at the time $T_{opt}$.
\end{itemize}
\end{theorem}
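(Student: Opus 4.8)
The plan is to reach an arbitrary state $w_T$ at the time $T_{opt}$ by running the same scheme that proves Theorem~\ref{thm1}, while keeping track of the additional data that has to be imposed when the target is $w_T$ rather than $0$. As in the null-controllability case, the starting point is backstepping.

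\textbf{Step 1 (reduction to a normal form).} First I would apply a backstepping transformation $w\mapsto u$ of Volterra type, $u(t,x)=w(t,x)-\int_0^x K(x,\xi)\,w(t,\xi)\,d\xi$ (possibly composed with a further bounded, boundedly invertible transformation), with a kernel $K$ solving the associated Goursat problem. One checks that the transformation and its inverse are bounded on $[L^2(0,1)]^n$, that the boundary relation \eqref{bdry-0} is left unchanged, and that \eqref{bdry-1} is modified only through a causal invertible feedback; hence exact controllability of \eqref{Sys-1} (with $C$ replaced by $\gamma C$), \eqref{bdry-0}, \eqref{bdry-1} at $T_{opt}$ is equivalent to that of the transformed system, in which the coupling $\gamma C$ is replaced by a structurally much simpler one (in the best case, zero on the block that matters). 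The decisive features for the genericity argument are that $K$, and with it the whole reduction, depends analytically on $\gamma$, and that $\gamma=0$ gives back the homogeneous system.

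\textbf{Step 2 (characteristics and the cascade).} For the transformed, essentially homogeneous, system I would use the method of characteristics to express $w(T_{opt},\cdot)$ in terms of $w_0$ and of the boundary traces, which satisfy a closed system of difference (and, if $\gamma C$ is not entirely removed, Volterra) equations driven by the controls $W_{k+1},\dots,W_{k+m}$. Up to reparametrization of the space variable, the $m$ left-moving components $w_{k+1},\dots,w_{k+m}$ at time $T_{opt}$ are the restrictions of the corresponding controls to the windows $[T_{opt}-\tau_{k+j},T_{opt}]$, so matching $w_T$ on them \emph{fixes} the ``late'' part of each control; the $k$ right-moving components $w_1,\dots,w_k$ at time $T_{opt}$ are then recovered, through \eqref{bdry-0}, from the values of $\sum_j B_{ij}W_{k+j}$ on earlier, staggered windows whose lengths are governed by the index pairing in \eqref{def-Top}. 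At $T_{opt}$ these windows reach back exactly far enough that the controls can absorb the contribution of $w_0$ to $w_1,\dots,w_k$ at time $T_{opt}$. Exact controllability at $T_{opt}$ thus reduces to solving this cascade for the controls subject simultaneously to the terminal constraints encoding $w_T$ and the constraints encoding $w_0$. The hypothesis $m\ge k$ keeps the $m$ controls from being over-determined by the $m$ terminal constraints on the left-movers while still leaving room for the $k$ constraints coming from the right-movers, and $B\in{\mathcal B}_e$ — invertibility of the last $i\times i$ submatrix of $B$ for \emph{all} $1\le i\le k$ — is precisely the chain of conditions needed to invert \eqref{bdry-0} block by block and resolve the cascade recursively. (For null-controllability the weaker requirement $B\in{\mathcal B}$ suffices, since there are no terminal constraints and one never needs $B$ itself to be invertible; this is also why $m\ge k$ is absent from Theorem~\ref{thm1}.)

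\textbf{Step 3 (genericity).} Finally I would show that solvability of the cascade for every $(w_0,w_T)$ is equivalent to the invertibility of a single finite-dimensional linear map, i.e.\ to the non-vanishing of a determinant $\Delta(\gamma,B)$ assembled from $B$ and the numbers $\tau_1,\dots,\tau_n$ of \eqref{def-tau}; $\Delta(\gamma,B)$ is a polynomial in the entries of $B$ whose coefficients are analytic in $\gamma$ (up to a factor that is nonzero throughout ${\mathcal B}_e$). Step~2 applied at $\gamma=0$ shows $\Delta(0,B)\ne0$ for every $B\in{\mathcal B}_e$. Hence, for fixed $B\in{\mathcal B}_e$, $\gamma\mapsto\Delta(\gamma,B)$ is analytic and not identically zero, so it vanishes only on a discrete set of $\gamma$, which gives~(i); and since $\Delta(0,\cdot)\not\equiv0$ some coefficient $c_\alpha(\gamma)$ is a nonzero analytic function, so $\{\gamma:\Delta(\gamma,\cdot)\equiv0\}\subset\{c_\alpha=0\}$ is discrete, and for $\gamma$ outside it the polynomial $\Delta(\gamma,\cdot)$ is not identically zero, whence its zero set meets ${\mathcal B}_e$ in a set of measure zero, which gives~(ii). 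The step I expect to be the genuine obstacle is the bookkeeping inside Step~2: showing that, with the staggered windows forced by $T_{opt}$, the $m$ terminal constraints on the left-movers, the $k$ terminal constraints on the right-movers, and the constraints coming from $w_0$ can all be met at once — that the cascade is really, not only formally, solvable — and pinning down that $m\ge k$ together with $B\in{\mathcal B}_e$ is exactly the right hypothesis, with neither part removable. Everything else, including the backstepping reduction and the analyticity argument, parallels the proof of Theorem~\ref{thm1}.
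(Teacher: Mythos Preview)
Your overall plan --- backstepping to a normal form, then characteristics, then a genericity argument anchored at $\gamma=0$ --- is exactly the route the paper takes; the paper itself only sketches Theorem~\ref{thm1} and states that Theorem~\ref{thm2} is proved ``in the same spirit''. The one substantive discrepancy is in your Step~3. You assert that solvability of the cascade reduces to the non-vanishing of a \emph{finite-dimensional} determinant $\Delta(\gamma,B)$ assembled from $B$ and the $\tau_i$. In the paper the characteristic bookkeeping instead produces an equation $U+\mathcal{K}U=\mathcal{F}$ for the control, with $\mathcal{K}$ a genuinely infinite-dimensional compact (integral) operator depending analytically on $\gamma$, and the discrete-exceptional-set conclusion comes from the analytic Fredholm alternative, not from a scalar determinant. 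The reason is that the non-local term $S(x)u(t,0)$ surviving the backstepping contributes integrals along characteristics, and $S$ itself depends on $\gamma$ through the kernel $K$; for $\gamma\neq 0$ this does not collapse to a finite-rank perturbation, so your polynomial-determinant picture is an oversimplification.

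The repair is minor in spirit but does change the tool: replace ``$\Delta(\gamma,B)\neq 0$'' by ``$I+\mathcal{K}(\gamma,B)$ invertible'', check from your Step~2 analysis that $I+\mathcal{K}(0,B)$ is invertible for every $B\in\mathcal{B}_e$ (this is where $m\ge k$ and the full chain of submatrix conditions in $\mathcal{B}_e$ are used), and invoke analytic Fredholm for part~(i). For part~(ii) one needs in addition analytic dependence of $\mathcal{K}$ on $B$, which is immediate since $B$ enters only through the matrix $Q$ of \eqref{def-Q}. Everything else in your outline, including the identification of $B\in\mathcal{B}_e$ and $m\ge k$ as the right hypotheses for exact (as opposed to null) controllability, lines up with the paper.
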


 For the exact controllability and $T>T_{opt}$, the generic assumption was removed in \cite{HO19} (see also \cite[Theorem 4]{CoronNg19-2}), where the following theorem is established
 \begin{theorem}\label{thm4}
Let $m \ge k \ge 1$.   Assume that $B \in \cB_e$ defined in \eqref{def-Be}. The control system \eqref{Sys-1}, \eqref{bdry-0}, and \eqref{bdry-1} is exactly controllable at any time $T$ greater than $T_{opt}$.
\end{theorem}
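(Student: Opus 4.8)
\emph{Sketch of the approach.} The plan is to establish the a priori weaker statement that, for any $T>T_{opt}$, every final state $w_T\in[L^2(0,1)]^n$ is reachable from the zero initial datum; this is in fact equivalent to exact controllability at time $T$. Indeed, the solution of \eqref{Sys-1}, \eqref{bdry-0}, \eqref{bdry-1} at time $T$ depends affinely on the pair (initial datum, control), say $w(T)=S_Tw_0+L_TW$ with $S_T$ the free evolution and $L_T$ the control-to-state map, both linear. Hence exact controllability at time $T$ amounts to solving $L_TW=w_T-S_Tw_0$ for every right-hand side, i.e.\ to surjectivity of $L_T$, which is precisely reachability of an arbitrary state from $0$ (and which, in particular, contains the null-controllability).

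The first step is to simplify the source term $C$ by a backstepping change of unknowns $\widetilde{w}(t,x)=w(t,x)-\int_0^xK(x,y)\,w(t,y)\,dy$, where the kernel $K$ satisfies a well-posed hyperbolic system on the triangle $\{(x,y):0\le y\le x\le1\}$ with boundary conditions on $\{y=x\}$ and $\{y=0\}$. Such a Volterra-type transform is bounded on $[L^2(0,1)]^n$ with bounded inverse, leaves the speeds $\Sigma$ and the boundary relation \eqref{bdry-0} unchanged (since $\widetilde w(t,0)=w(t,0)$), and turns the controls \eqref{bdry-1} into controls of the same type for $\widetilde w$. One cannot in general remove $C$ entirely while preserving the structure, but one can reduce it to a residual source that is block-triangular, or supported near $x=0$; such a residual does not travel in the wrong direction and is harmless below. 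It therefore suffices to prove reachability from $0$ for the resulting target system.

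For the target system I would integrate along characteristics. The controls at $x=1$ prescribe the components $\widetilde w_{k+1},\dots,\widetilde w_{k+m}$ everywhere (tracing leftward characteristics back to $x=1$, or to $t=0$ where the datum vanishes); through \eqref{bdry-0} these fix $(\widetilde w_1,\dots,\widetilde w_k)(t,0)$, and hence, by tracing rightward characteristics, the components $\widetilde w_1,\dots,\widetilde w_k$ everywhere; the residual source only contributes lower-order Volterra terms that do not affect the time budget. Imposing $\widetilde w(T,\cdot)=\widetilde w_T$ then unfolds as follows: the data $\widetilde w_{T,k+1},\dots,\widetilde w_{T,k+m}$ fix each $W_{k+j}$ on the last interval $(T-\tau_{k+j},T)$, while the data $\widetilde w_{T,1},\dots,\widetilde w_{T,k}$ impose, via \eqref{bdry-0}, a triangular system of Volterra functional equations $\sum_{j=1}^m B_{ij}W_{k+j}(\cdot)=g_i(\cdot)$, $1\le i\le k$, on earlier time intervals. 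At this point $m\ge k$ supplies at least as many controls as equations, the invertibility of the $i\times i$ matrices formed from the last $i$ rows and last $i$ columns of $B$ ($B\in\cB_e$) allows one to solve the system recursively in $i$, and $T>T_{opt}$ is exactly what guarantees that all the time intervals arising in the recursion lie inside $(0,T)$ and fit together so that the recursion closes. The remaining freedom, namely the $m-k$ surplus controls and the early portions of all the controls, is set to zero, which also yields $\widetilde w(0,\cdot)=0$.

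The main obstacle, I expect, is this last bookkeeping: tracking the characteristics finely enough to see why the constant $T_{opt}$ in \eqref{def-Top}, and nothing smaller, makes the triangular system of functional equations solvable, and why $m\ge k$ together with $B\in\cB_e$ are the correct hypotheses. The other ingredients are by now routine: well-posedness together with $L^2$-boundedness and invertibility of the backstepping transform under only $C\in L^\infty$, building on the existing backstepping literature, and the Duhamel/fixed-point estimates for the residual source. Finally, it is the \emph{strict} inequality $T>T_{opt}$ that permits dropping the genericity assumptions on $B$ and $C$ present in Theorems~\ref{thm1}--\ref{thm2}: at $T=T_{opt}$ the functional-equation system may degenerate for exceptional $B$ and $C$, whereas any extra time restores its invertibility.
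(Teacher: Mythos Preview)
Your proposal takes a genuinely different route from the paper. The paper does not give a detailed proof of Theorem~\ref{thm4} but states explicitly that its proof (in \cite[Theorem~4]{CoronNg19-2}) is ``in the same spirit'' as the proof of Theorem~\ref{thm3} sketched in Section~3.2: after the backstepping reduction to system \eqref{eq-u}--\eqref{control-u}, one passes to the \emph{dual} problem, proves an observability inequality by a compactness-uniqueness argument (Lemma~\ref{lem-compact}), and rules out the finite-dimensional obstruction space $Y_T$ by a spectral contradiction \`a la Bardos--Lebeau--Rauch. Your plan, by contrast, stays on the \emph{primal} side: you attempt to construct the control explicitly by tracing characteristics and solving a triangular system of functional equations for $W$.

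There is, however, a real gap. The direct characteristic construction you outline is essentially what the paper uses for Theorem~\ref{thm1}: it writes the controllability conditions as $(I+\mathcal K)U=\mathcal F$ with $\mathcal K$ compact, and then invokes analytic Fredholm theory to conclude \emph{only generically} in $\gamma$ (with $C$ replaced by $\gamma C$). The whole point of Theorem~\ref{thm4} is to drop that genericity, and your final sentence---``any extra time restores its invertibility''---is precisely the step that needs an argument and does not get one. After backstepping the residual source is the \emph{nonlocal} term $S(x)u(t,0)$ of \eqref{eq-u}, not a local lower-order term; even with the strict upper-triangular structure \eqref{cond-S-new} of $S_{++}$, the functional equations you obtain for $W$ still couple the control to its own past trace $u_+(\cdot,0)$ through Volterra-type integrals, and you have not explained why $I+\mathcal K$ is invertible for \emph{every} $C$ once $T>T_{opt}$. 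That is exactly the obstacle the paper's duality/compactness/spectral route is designed to overcome: it shows that any failure of observability would produce a nonzero eigenfunction of $\cA V=\Sigma V'+\Sigma'V$ on a finite-dimensional space satisfying \eqref{thm2-bdry-V0}--\eqref{thm2-bdry-V}, which is then ruled out by characteristics. Your sketch would need either a comparable uniqueness mechanism or a direct proof that the extra time makes the Volterra system \emph{strictly} triangular in a sense strong enough to force invertibility; neither is supplied.
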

(In fact \cite{HO19} gives the optimal time of exact controllability even if $B \not \in \cB_e$.)

 For the null controllability and $T>T_{opt}$, we removed the generic assumption was in \cite[Theorems 2]{CoronNg19-2}, where the following theorem is established
\begin{theorem}\label{thm3} Let $k \ge m \ge 1$. Assume  that  $B \in \cB$ defined in \eqref{def-B}. The control system \eqref{Sys-1}, \eqref{bdry-0}, and \eqref{bdry-1} is null-controllable at any time $T$ greater than $T_{opt}$.
\end{theorem}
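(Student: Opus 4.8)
\medskip
\noindent\emph{Proof idea.} The plan is to combine a backstepping transformation with an analysis along characteristics. Write $\gamma_-=(\gamma_1,\dots,\gamma_k)\tr$ and $\gamma_+=(\gamma_{k+1},\dots,\gamma_{k+m})\tr$, as for $w$. First I would look for an invertible Volterra transformation of the second kind
\[
\gamma(t,x)=w(t,x)-\int_0^{x}K(x,y)\,w(t,y)\,dy ,\qquad 0\le x\le 1,
\]
with kernel $K$ supported in $\{0\le y\le x\le 1\}$, chosen so that $\gamma$ solves a system of the form \eqref{Sys-1} with $C$ replaced by a much sparser matrix $\widetilde C$. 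Since $\gamma(t,0)=w(t,0)$, the boundary relation \eqref{bdry-0} is preserved; the control \eqref{bdry-1} becomes a new control $\widetilde W(t)=\gamma_+(t,1)$ related to $W$ by a bounded, causal, boundedly invertible map on $[L^2(0,T)]^m$, so null-controllability of \eqref{Sys-1}, \eqref{bdry-0}, \eqref{bdry-1} at a time $T$ is equivalent to that of the target system at the same $T$. The kernel equation for $K$ is the usual over-determined hyperbolic system on the triangle, so only part of $C$ can be killed; the design goal is to leave $\widetilde C$ supported in a strictly triangular portion of the block that injects $\gamma_+$ into $\gamma_-$, ordered by the characteristic speeds, and to verify the resulting compatibility and transmission conditions.

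\smallskip
Next I would integrate the target system along its characteristics, turning it into an equivalent system of delay equations for the boundary traces $\gamma_\pm(\cdot,0)$, $\gamma_-(\cdot,1)$ and the control $\widetilde W$. The requirement $\gamma(T,\cdot)=0$ then reads: for $1\le l\le m$, $\widetilde W_{k+l}$ vanishes on an interval of length $\tau_{k+l}$ ending at $T$; and, through \eqref{bdry-0} and the leftward transport, for each $1\le i\le k$ the combination $\sum_{l=1}^m B_{il}\widetilde W_{k+l}$, with $\widetilde W_{k+l}$ delayed by $\tau_{k+l}$, equals a prescribed function (coming from the initial datum and, in general, from $\widetilde C$ acting on $\gamma_-$) on an interval of length $\tau_i$ ending at $T$. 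In the homogeneous case $\widetilde C=0$ this is a finite family of linear constraints on $\widetilde W$; I would solve them by a deadbeat construction that, stage by stage and in the right temporal order, inverts the nested bottom-right submatrices of $B$ supplied by the definition of $\cB$ (here $k\ge m$, so \eqref{cond-B-1} is available for $1\le i\le m-1$), thereby routing the obstruction carried by the slower components $w_{k+1},\dots$ through the faster, already-evacuated ones. Tracking the delays, the bookkeeping shows that everything --- including the time needed to flush out the initial datum --- fits inside $[0,T]$ exactly when $T\ge T_{opt}$.

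\smallskip
To reinstate $\widetilde C$, observe that, since $\widetilde C$ sits only in a strictly triangular part, its contribution to the constraints above is carried by integral operators with \emph{strictly positive} delays; the constraints then become a linear Volterra equation for $\widetilde W$ on $[0,T]$ whose integral operator is quasi-nilpotent on suitable weighted $L^2$ spaces, hence the equation is uniquely solvable and yields $\widetilde W\in[L^2(0,T)]^m$. One needs here only $T\ge T_{opt}$; the strict inequality $T>T_{opt}$ merely provides a margin that makes the construction robust. Pulling $\widetilde W$ back through the inverse of the transformation of the first step gives a control $W\in[L^2(0,T)]^m$, and the well-posedness of the control system in $[L^2(0,1)]^n$ shows this $W$ drives $w_0$ to $0$ at time $T$; running the same argument with $L^\infty$ in place of $L^2$ gives the $L^\infty$ statement.

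\smallskip
The main obstacle is the first step together with the deadbeat construction of the second. One has to pin down \emph{which} part of $C$ an \emph{invertible} transformation compatible with the one-sided control \eqref{bdry-1} can erase, check the compatibility conditions of the truncated kernel equation, and then show that the residual triangular coupling, read along characteristics, gives a genuinely causal (delay, hence quasi-nilpotent) Volterra problem solvable for \emph{every} $B\in\cB$ and \emph{every} $T>T_{opt}$ --- not only generically, as in Theorem~\ref{thm1}. The combinatorial heart is proving that the obstruction produced by the initial datum can be routed, using only the bottom-right minors of $B$ guaranteed by $\cB$, so as to be cleared exactly by time $T_{opt}$.
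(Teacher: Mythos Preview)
Your proposal shares the paper's opening move --- the backstepping reduction --- but then takes a genuinely different route: you attempt to \emph{construct} the control directly by solving a delay/Volterra system, whereas the paper passes to the dual problem via the Hilbert uniqueness method (Lemma~\ref{lem-observability}), establishes a compactness lemma for the adjoint dynamics (Lemma~\ref{lem-compact}), and runs a Bardos--Lebeau--Rauch style compactness--uniqueness argument: assuming observability fails, one extracts a nontrivial finite-dimensional space $Y_T$ of limiting data, shows it is invariant under $V\mapsto\Sigma\partial_x V+\Sigma' V$, produces an eigenfunction, and reaches a contradiction via characteristics. The strict inequality $T>T_{opt}$ enters essentially, through the monotonicity $Y_{T_2}\subset Y_{T_1}$ and the uniform bound on $\dim Y_T$, which together force the finite-dimensional eigenvalue problem.

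Your direct route has a genuine gap. The assertion that the residual coupling yields an integral operator with ``strictly positive delays'', hence quasi-nilpotent and always invertible, is not justified and is in fact false at $T=T_{opt}$: the remark following Theorem~\ref{thm1} (for $m=2$) exhibits matrices $C$ for which null-controllability fails at $T_{opt}$, directly contradicting your claim that ``One needs here only $T\ge T_{opt}$''. What the characteristic analysis actually produces --- and this is precisely the mechanism behind Theorem~\ref{thm1} --- is an equation $U+\cK U=\cF$ with $\cK$ merely \emph{compact}; analytic Fredholm then gives invertibility only generically. Two related issues: first, the target you posit, a local coupling $\widetilde C(x)\gamma(t,x)$ supported only in the $(-,+)$ block, is not what the paper's backstepping reaches (it obtains the non-local term $S(x)u(t,0)$ with both an $(-,+)$ block and a strictly upper-triangular $(+,+)$ block, see \eqref{cond-S}--\eqref{cond-S-new}), and you do not show your target is attainable by an invertible Volterra transform; second, even granting it, the $k$ constraints from $u_-(T,\cdot)=0$ live on overlapping time intervals and interact through $u_+(\cdot,0)$ with the source, and it is this interaction that destroys the pure Volterra structure. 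As written, your scheme recovers at best the generic statement of Theorem~\ref{thm1}, not Theorem~\ref{thm3}; the passage from ``generic'' to ``every $B\in\cB$, every $C$'' is exactly what the paper's compactness--uniqueness argument supplies.
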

\begin{remark} \rm
Related controllability results can be also found in \cite{LongHu,Weck}.
\end{remark}

Concerning the optimality of $T_{opt}$, we can prove the following result \cite[Proposition 1.6]{CoronNg19}

\begin{proposition} \label{pro-C} Assume that $C \equiv 0$ and   \eqref{cond-B-1} holds for $1 \le i \le  \min\{k, m\}$,  then, for any $T < T_{opt}$, there exists an initial datum such that $u(T, \cdot) \not \equiv 0$ for every control.
\end{proposition}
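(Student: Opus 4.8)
The claim is a lower bound: when $C\equiv 0$, the homogeneous system propagates information along characteristics with finite speed, so controls at $x=1$ cannot influence the whole state before time $T_{opt}$. The plan is to exhibit, for each $T<T_{opt}$, an initial datum whose effect on the state at time $T$ cannot be cancelled by any choice of $W$, by tracking the explicit solution along characteristic curves.

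First I would write down the solution of \eqref{Sys-1} with $C\equiv 0$ by the method of characteristics: each component $w_j$ is constant along the curve $\dot x = -\Sigma_{jj}(x)$, i.e.\ $w_j(t,x)$ is determined by the data it carries from either the initial line $\{t=0\}$ or one of the boundaries. The negative-speed components $w_1,\dots,w_k$ enter from $x=0$ (hence are governed by \eqref{bdry-0}), while the positive-speed components $w_{k+1},\dots,w_{k+m}$ enter from $x=1$ (hence are the controls \eqref{bdry-1}). The key geometric quantities are the travel times $\tau_i$ of \eqref{def-tau}: a signal injected at $x=1$ in channel $k+l$ reaches $x=0$ at time $\tau_{k+l}$, is reflected there into the channels $w_1,\dots,w_k$ via $B$, and a reflected signal in channel $i$ reaches $x=1$ again at time $\tau_i$. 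I would then trace, backward from $(T,x)$, the characteristic through a generic interior point and count how many boundary reflections it undergoes before time $0$; the state at $(T,x)$ depends on the control only if that backward characteristic hits $x=1$ at some positive time, equivalently if $x$ lies in the "reachable" part of the domain at time $T$.

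The heart of the argument is the combinatorics encoded in $T_{opt}$ (formula \eqref{def-Top}). For $T<T_{opt}$, by definition of the max there is a row index $i$ (in the relevant range, $1\le i\le\min\{k,m\}$, which is exactly where \eqref{cond-B-1} is assumed) with $\tau_i+\tau_{?}>T$ — more precisely $\tau_i+\tau_{m+i}>T$ when $m\ge k$, and the analogous pairing when $m<k$; the case $\tau_{k+1}>T$ is handled separately. I would then choose the initial datum supported so that, following characteristics forward, the first time any portion of it can be "seen" from the boundary $x=1$ (so as to be affected or cancelled by the control) exceeds $T$. Concretely, one places a suitable profile in the slowest relevant incoming channel so that its forward characteristic requires a reflection sequence of total length $>T$ before returning to $x=1$; until that happens, the corresponding piece of $w(T,\cdot)$ is exactly the freely-propagated datum, independent of $W$, hence nonzero if the datum is. The invertibility hypothesis \eqref{cond-B-1} is used to guarantee that the reflection map $B$ does not accidentally annihilate the propagated signal at the boundary $x=0$ (so the signal genuinely survives each reflection), and to make the counting of minimal reflection times sharp.

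The main obstacle is the bookkeeping of characteristics under repeated boundary reflections: one must verify that the minimal "round-trip" time over all characteristics that can be reached from $x=1$ is exactly $T_{opt}$, and that for $T$ below this value the freely-propagated component of the solution at $(T,\cdot)$ genuinely does not vanish for a well-chosen datum — this requires the non-degeneracy of the submatrices of $B$ in \eqref{cond-B-1} and a careful case split according to whether $m\ge k$ or $m<k$ and whether the maximizing term in \eqref{def-Top} is one of the $\tau_i+\tau_{m+i}$ (resp.\ $\tau_{k+1-m+\cdot}+\tau_{k+1+\cdot}$) or the single term $\tau_{k+1}$. Once the characteristic/reflection analysis is set up, the conclusion $u(T,\cdot)\not\equiv0$ for every control follows by a direct comparison of the controlled and uncontrolled solutions on the region not yet influenced by $x=1$.
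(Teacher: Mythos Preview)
The paper does not actually contain a proof of Proposition~\ref{pro-C}; it merely states the result and cites \cite[Proposition~1.6]{CoronNg19} for the argument, so there is no in-paper proof to compare against. That said, your outline is the natural (and essentially the correct) approach for this optimality statement: with $C\equiv 0$ the components decouple into pure transport equations coupled only through the boundary law \eqref{bdry-0}, and the explicit characteristic/reflection picture you describe is exactly how one identifies, for $T<T_{opt}$, a region of $(T,\cdot)$ that no control at $x=1$ can reach in time.

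Two points where your sketch is loose and would need to be tightened in a full proof. First, the role of \eqref{cond-B-1} is not merely that ``$B$ does not accidentally annihilate the signal''; rather, one must argue that the controllable part of $w_+(t,0)$ (the fast $+$-channels, those $k+j$ with $\tau_{k+j}$ small enough to have been reached from $x=1$) is too low-dimensional to cancel the contribution of the uncontrollable part in the relevant rows of $B w_+(t,0)$. Concretely, if $T<\tau_i+\tau_{m+i}$ (case $m\ge k$), then for $t$ just above $T-\tau_i$ only the last $k-i$ entries of $w_+(t,0)$ are influenced by the control, so the last $i$ rows of $Bw_+(t,0)$ are determined up to the image of a $(k-i)$-dimensional space; the invertibility of the bottom-right $i\times i$ block of $B$ (and the nested blocks) is exactly what guarantees one can choose initial data so that this cannot be driven to zero. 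Second, your description ``place a profile in the slowest relevant incoming channel'' is imprecise about which component carries the obstructing data and how it interacts with the other uncontrolled $+$-channels; the actual construction places compatible data in several $w_-$ and $w_+$ components simultaneously so that the resulting $w_-(T,\cdot)$ near $x=1$ is forced to be nonzero regardless of the control. These are bookkeeping issues rather than conceptual gaps; the strategy itself is sound and almost certainly the one used in \cite{CoronNg19}.
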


\begin{remark} The controls used in \Cref{thm1,thm2,thm3,thm4} are of the form
\begin{equation}\label{controls}
w_+(t, 1) = \sum_{r=1}^R A_r(t) w(t, x_r) + \int_0^1 M(t, y) w(t, y) \, dy + h(t),
\end{equation}
where  $R\in \mN$,  $A_r: [0, T] \to \mR^{m \times n}$, $x_r \in [0, 1]$ $(1 \le r \le R)$, $M: [0, T] \times [0, 1] \to \mR^{n \times n}$, and $h \in [L^\infty(0, T)]^m$. Moreover, the following conditions hold:
\begin{equation}\label{assumption-1}
x_r < c < 1 \mbox{ for some constant $c$},
\end{equation}
\begin{equation}\label{assumption-2}
A_r \in [L^\infty  (0, T)]^{n\times n},  \quad
M \in [L^\infty \big((0, T) \times (0, 1) \big)]^{n\times n},
\end{equation}
for $1 \le r \le R$. The well-posedness of \eqref{Sys-1}, \eqref{bdry-0}, and \eqref{controls} for broad solutions (see \cite[Definition 3.1]{CoronNg19}) was established in \cite[Lemma 3.2]{CoronNg19}.
\end{remark}

We next discuss homogeneous quasilinear hyperbolic systems. More precisely, we consider the equation, for $(t, x)  \in [0, + \infty) \times (0, 1)$,
\begin{equation}\label{Sys-1-QL}
\partial_t w (t, x) =  \Sigma \big(x, w(t, x) \big) \partial_x w (t, x),
\end{equation}
instead of \eqref{Sys-1}, and the boundary and control conditions \eqref{bdry-0} and \eqref{bdry-1}. We assume that  $\Sigma(x, y)$ for $x \in [0, 1]$ and   $y \in \mR^n$ is of the form
\begin{multline}\label{form-A-new}
\Sigma(x, y) = \mbox{diag} \Big(- \lambda_1(x, y), \cdots, - \lambda_{k}(x, y), \\
  \lambda_{k+1}(x, y), \cdots,  \lambda_{k+m}(x, y) \Big),
\end{multline}
where
\begin{multline}\label{relation-lambda-*}
-\lambda_{1}(x, y) < \cdots <  - \lambda_{k} (x, y) \\
< 0 < \lambda_{k+1}(x, y) < \cdots <\lambda_{k+m}(x, y).
\end{multline}
We assume,  for $1 \le i \le n = k + m$,
\begin{equation}\label{cond-lambda-*}
\mbox{$\lambda_i$  is of class $C^2$ with respect to $x$ and $y$}
\end{equation}

Concerning the quasilinear system \eqref{Sys-1-QL}, \eqref{bdry-0}, and \eqref{bdry-1}, we prove \cite[Theorem 1.1]{CoronNg20}:

\begin{theorem}  \label{thm5}  Assume that $B \in {\mathcal B}$.  For any $T > T_{opt}$, there exist $\eps > 0$ and a time-independent feedback control for \eqref{Sys-1-QL}, \eqref{bdry-0}, and \eqref{bdry-1}  such that if the compatibility conditions $($at $x =0$$)$  \eqref{compatibility-0} and \eqref{compatibility-1} below hold for $w(0, \cdot)$,   and
$$
\| w(0, \cdot) \|_{C^1([0, 1])} < \eps,
$$
then   $w(T, \cdot) =0$.
\end{theorem}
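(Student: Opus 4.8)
The plan is to use, as feedback, the time-independent law designed for the system obtained by linearizing \eqref{Sys-1-QL} at $w=0$, and then to show that, since $T>T_{opt}$, the finite-time property of the resulting linear closed loop is not destroyed by the quasilinear terms for data small in $C^1$. Linearizing \eqref{Sys-1-QL} at $w=0$ gives $\partial_t w=\Sigma(x,0)\partial_x w$, which is of the form \eqref{Sys-1} with $C\equiv 0$ and $\Sigma(x):=\Sigma(x,0)$; by \eqref{relation-lambda-*} and \eqref{cond-lambda-*} this $\Sigma$ satisfies \eqref{relation-lambda} and \eqref{cond-lambda}. Fix $T_0\in(T_{opt},T)$. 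By the linear theory behind Theorem~\ref{thm1} in its feedback form (see \cite{CoronNg19,CoronNg20-L,CoronNg20}), for $B\in\cB$ there is a time-independent feedback $w_+(t,1)=\mathcal F\big(w(t,\cdot)\big)$ of the form \eqref{controls}--\eqref{assumption-2} with $h=0$ such that the linear closed loop is finite-time stable: every solution vanishes for $t\ge T_0$. Equivalently, $\mathcal F$ comes with a bounded, boundedly invertible backstepping transform $w\mapsto z=\Gamma w$ (on $[L^2(0,1)]^n$ and on $[C^1([0,1])]^n$, with $\Gamma 0=0$) conjugating the linear closed loop to a target system whose principal part is diagonal with speeds $\Sigma(x,0)$, whose boundary condition at $x=0$ is unchanged ($z_-(t,0)=Bz_+(t,0)$), and whose boundary condition at $x=1$ has a cascade form making the components clear successively, the last by time $T_{opt}$; it also comes with a Lyapunov functional $V$ adapted to $\mathcal F$. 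The invertibility of the lower--right $i\times i$ blocks of $B$ in \eqref{cond-B-1}, i.e.\ $B\in\cB$, is exactly what permits re-routing the slow reflections at $x=0$ and lowering the clearing time from the value \eqref{def-T1} (or \eqref{def-T2}) down to $T_{opt}$.

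Next I would install this same $\mathcal F$ in the quasilinear system and establish well-posedness of the closed loop \eqref{Sys-1-QL}, \eqref{bdry-0}, $w_+(t,1)=\mathcal F(w(t,\cdot))$ on $[0,T]$. Since $\Sigma(x,y)$ is diagonal, $w_i$ is constant along the integral curve of $\dot x=-\Sigma_{ii}(x,w)$, so the closed loop is equivalent to a fixed-point problem for $(w,\text{characteristics})$ in $C^1$; the interior evaluations $w(t,x_r)$ with $x_r<c<1$ and the integral term in \eqref{controls} respect causality and do not obstruct solvability. Using the compatibility conditions \eqref{compatibility-0}--\eqref{compatibility-1} at the corners to get $C^1$ regularity, and a Gronwall estimate using $\Sigma(x,w)-\Sigma(x,0)=O(|w|)$ with $C^2$ dependence, I obtain: if $\|w(0,\cdot)\|_{C^1}<\eps$ with $\eps$ small enough, the closed loop has a unique $C^1$ solution on $[0,T]$ with $\|w(t,\cdot)\|_{C^1}\le C\eps$ for $t\in[0,T]$. (This is the quasilinear analogue of the well-posedness of broad solutions in \cite[Lemma 3.2]{CoronNg19}; compare \cite{Li94,CoronNg15}.)

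The crux is to show $w(T,\cdot)=0$. By the method of characteristics the closed-loop solution on $[0,T]$ is determined by its boundary traces, and tracing characteristics --- whose travel times are those of $\Sigma(x,0)$ perturbed by $O(\eps)$ --- reduces the evolution of the traces of $z=\Gamma w$ to a system of nonlinear difference (delay) equations whose \emph{linear} part is precisely the nilpotent cascade of the linear target system (its composition over the time horizon $T_0$ vanishes). The remaining contributions are the quadratic error produced by applying the linear $\Gamma$ to the quasilinear equation --- alternatively one may use a genuinely nonlinear $\Gamma$ and get an error cubic in the traces --- together with the $O(\eps)$ perturbation of the delays; both are dominated by the smallness, and the slack $T-T_0>0$ is exactly what is needed so that, even after the perturbed delays, the nilpotency is attained before time $T$. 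A causal bootstrap in the characteristic time --- each quadratic (or cubic) term being built from components already shown to vanish on the relevant sub-interval, hence itself vanishing a little later --- then propagates ``$z(t,\cdot)=0$'' from an initial sub-interval up to $t=T$, with the Lyapunov functional $V$ supplying the quantitative decay needed to run the induction. Since $\Gamma$ is invertible and $\Gamma 0=0$, $w(T,\cdot)=0$.

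The main obstacle is this last step: propagating the \emph{exact} vanishing through the nonlinear closed loop. One must verify that the $O(\eps)$ change of the characteristic travel times and the quadratic (or cubic) reinjection through the feedback create no contribution surviving past $T_{opt}+o(1)$ --- which is what forces $T>T_{opt}$ and a careful causal bookkeeping along characteristics --- while keeping the solution $C^1$ and compatible with the corner conditions throughout and reconciling the backstepping transform with the quasilinear characteristics.
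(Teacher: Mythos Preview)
Your approach differs substantially from the paper's, and the difference is exactly where the argument has a gap.

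The paper does \emph{not} install the linear feedback in the quasilinear system. It builds a genuinely nonlinear feedback directly for \eqref{Sys-1-QL}. From the hypothesis $B\in\cB$, Gaussian elimination and the implicit function theorem produce $C^2$ maps $M_1,\dots,M_k$ with $M_l(0)=0$ expressing $w_{m+l}(t,0)$ in terms of $w_{k+1}(t,0),\dots,w_{m+l-1}(t,0)$ once the conditions $w_k(t,0)=\dots=w_{k-l+1}(t,0)=0$ are imposed. The feedback at $x=1$ is then $w_{m+l}(t,1)=M_l$ evaluated at the values $w_{k+j}\big(t,x_{k+j}(t,t+t_{m+l},0)\big)$, where the points $x_{k+j}(t,t+t_{m+l},0)$ lie on the \emph{actual, state-dependent characteristics} of the solution and are chosen so that these values reach $x=0$ at exactly the same instant as $w_{m+l}(t,1)$ does. (Auxiliary variables $\zeta_j,\eta_j$ are adjoined as a dynamic extension so that no compatibility condition at $x=1$ need be assumed.) Because the evaluation points move with $w$, the identity $w_{m+l}(t',0)=M_l\big(w_{k+1}(t',0),\dots,w_{m+l-1}(t',0)\big)$ holds \emph{exactly}, and the boundary relation at $x=0$ then forces $w_k(t',0)=0$, next $w_{k-1}(t',0)=0$, and so on down to $w_1$. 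Exact null state by time $T_{opt}+O(\eps)<T$ follows directly from these identities along characteristics; no backstepping transform is used (for $C\equiv 0$ the kernel is trivial anyway), and the technical weight falls on the well-posedness of the closed loop with this nonlocal, solution-dependent feedback \cite[Lemma~2.2]{CoronNg20}.

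In your plan the feedback $\mathcal F$ uses \emph{fixed} evaluation points $x_r$ determined by $\Sigma(x,0)$. In the quasilinear flow the arrival times at $x=0$ of $w_{m+l}(t,1)$ and of $w_{k+j}(t,x_r)$ differ by $O(\eps)$, so the key identity above fails: one only obtains $w_k(t',0)=O(\eps^2)$, not $w_k(t',0)=0$. Your causal bootstrap cannot repair this: the mismatch produces contributions of the form $w_{k+j}(t',0)-w_{k+j}(t'',0)$ for two nearby times, and these are not built from components that have already vanished at that stage of the cascade --- they are the very traces the feedback for $w_{m+l}$ is meant to be cancelling. Iterating the estimate yields super-exponential decay, not finite-time extinction. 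The alternative cascade you hint at (wait until $w_{k+1},\dots,w_{m+l-1}$ vanish at the fixed $x_r$, so that $M_l=0$ and then $w_{m+l}$ clears) does give exact zero, but its total time is in general strictly larger than $T_{opt}$, so the conclusion ``for any $T>T_{opt}$'' is lost. The remedy is precisely what the paper does: let the feedback read along the true characteristics of the current state.
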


The compatibility conditions considered in \Cref{thm1} are:  \begin{equation}\label{compatibility-0}
w_-(0, 0) = B  \big( w_+(0, 0) \big)
\end{equation}
and
\begin{multline}\label{compatibility-1}
\Sigma_- \big(0, w(0, 0)\big) \partial_x w_-(0, 0) \\
= B \big( w_+(0, 0) \big)  \Sigma_+ \big(0, w(0, 0)\big) \partial_x w_+(0, 0).
\end{multline}
Here we denote
$$
\mbox{$w_- = (w_1, \cdots, w_k)\tr, \quad w_+ = (w_{k+1}, \cdots, w_{k+m})\tr$.}
$$
$$
\Sigma_-(x, y)  = \mbox{diag} \Big(- \lambda_1(x, y), \cdots, - \lambda_{k}(x, y) \Big)$$
and
$$
\Sigma_+(x, y)  = \mbox{diag} \Big( \lambda_{k+1}(x, y), \cdots,  \lambda_{n}(x, y) \Big).
$$

\begin{remark} \label{rem-NL} \rm In \cite{CoronNg20}, we also consider nonlinear boundary condition at $x =0$, i.e., instead of \eqref{bdry-0}, we deal with
\begin{equation*}
w_-(t, 0)  = \B \big(w_+(t,  0) \big) \mbox{ for } t \ge 0,
\end{equation*}
for some
$$
\mbox{$\B \in \big( C^2(\mR^m) \big)^k$ \mbox{with} $\B(0) = 0$,}
$$
\Cref{thm5} also holds if the condition $B \in \cB$ is replaced by $\nabla B(0) \in \cB$.

\end{remark}

\begin{remark}\rm The feedbacks constructed in \cite{CoronNg20} use additional $4m$ state-variables (dynamics extensions) to avoid imposing compatibility conditions at $x=1$.
\end{remark}

In our recent work \cite{CoronNg20-L}, we present Lyapunov's  functions for the feedbacks given in \Cref{thm5} and use estimates for Lyapunov's functions to rediscover the finite stabilization result.

\section{The ideas of the proof of \Cref{thm1}-\Cref{thm3}}

The starting point  of our analysis is the backstepping approach.  The key idea of the backstepping approach is to make the following change of variables
\begin{equation}\label{backstepping}
u(t, x) = w(t, x) - \int_0^x K(x,y) w(t, y) \, dy,
\end{equation}
for some kernel $K: \D \to \mR^{n \times  n}$  which is chosen in such a way that the system for $u$ is easier to control. Here
\begin{equation}\label{def-D}
\D = \big\{(x, y) \in (0, 1)^2; 0  <  y <  x \big\}.
\end{equation}
To determine/derive the equations for  $K$, we first compute $\partial_t u (t, x) - \Sigma(x) \partial_x u(t, x)$. Taking into account \eqref{backstepping},
we formally have \footnote{We assume here that $u$, $w$, and $K$ are smooth enough so that the below computations make sense.}
\begin{multline*}
\partial_t u (t, x)
=   \partial_t w(t, x) - K(x, x) \Sigma(x) w(t, x) + K(x, 0) \Sigma(0) w(t, 0) \\
+ \int_0^x \Big[ \partial_y  \big(K(x,y)  \Sigma(y) \big) w(t, y) - K(x, y) C(y) w(t, y) \Big] \, dy,
\end{multline*}
and
\begin{align*}
\partial_x u(t, x) = \partial_x w(t, x) - \int_0^x \partial_x K(x,y) w(t, y) \, dy - K(x,x) w(t, x).
\end{align*}
It follows from \eqref{Sys-1} that
\begin{multline}
\partial_t u (t, x) - \Sigma(x) \partial_x u(t, x) \\
=  \Big( C(x) - K(x,x) \Sigma(x)  + \Sigma(x) K(x, x) \Big) w(t, x) \\ + K(x, 0) \Sigma(0) u(t, 0)
+ \int_0^x \Big[ \partial_y K(x, y) \Sigma (y) + K(x, y)  \Sigma'(y)  \\
- K(x, y) C(y) + \Sigma(x) \partial_x K(x, y) \Big] w(t, y) \, dy.
\end{multline}

We search a  kernel $K$ which satisfies  the following two conditions
\begin{multline}\label{equation-K}
\partial_y K(x, y) \Sigma (y) +  \Sigma(x) \partial_x K(x, y) \\
+  K(x, y)  \Sigma'(y) - K(x, y) C(y)= 0 \mbox{ in } \D
\end{multline}
and, for $x \in (0, 1)$,
\begin{equation}\label{bdry1-K}
{\mathcal C}(x): = C(x) - K(x,x) \Sigma(x)  + \Sigma(x) K(x, x) = 0,
\end{equation}
so that one formally has
\begin{multline}\label{eq-u-*}
\partial_t u (t, x) =  \Sigma(x) \partial_x u (t, x) \\
+ K(x, 0) \Sigma(0)  u(t, 0)  \mbox{ for } (t, x)  \in \mR_+ \times (0, 1).
\end{multline}
Set
\begin{equation}\label{def-Q}
Q := \left(\begin{array}{cccc} 0_k & B \\[6pt]
0_{m, k} & I_m
\end{array}\right).
\end{equation}
and
\begin{equation}\label{def-S-*}
S(x): = K(x, 0) \Sigma (0) Q.
\end{equation}
Here and in what follows, $0_{i, j}$  denotes the zero matrix  of size $i \times j$ for $i, \, j \in \mN$, and $M_{pq}$ denotes the $(p, q)$-component of a matrix $M$.
From \eqref{def-Q}, the matrix $S \in [L^\infty(0, 1)]^{n \times n}$ has the structure
\begin{equation}\label{cond-S}
S = \left( \begin{array}{cl}
0_{k, k} & S_{-+} \\[6pt]
0_{m, k} & S_{++}
\end{array}\right),
\end{equation}

Using \eqref{bdry-0}, equation \eqref{eq-u-*} becomes, for  $(t, x)  \in \mR_+ \times (0, 1)$,
\begin{equation}\label{eq-u}
\partial_t u (t, x) =  \Sigma(x) \partial_x u (t, x) + S(x)  u(t, 0).
\end{equation}
We are able to  show that such a $K$ exists so that \eqref{eq-u} holds \cite[Lemma 3.3]{CoronNg19}; moreover, $K$ can be chosen in such a way that
\begin{equation}\label{cond-S-new}
\quad (S_{++})_{pq}(x) = 0 \mbox{ for } 1 \le q \le p \le m,
\end{equation}
this point turns out to be important for our analysis.  It is shown in \cite[Proposition 3.1]{CoronNg19} that  the null-controllability and the exact controllability   of \eqref{Sys-1}, \eqref{bdry-0}, and \eqref{bdry-1} at the time $T$ can be derived from the null-controllability and the exact controllability at the time $T$ of \eqref{eq-u} equipped  the boundary condition at $x=0$
\begin{equation}\label{bdry-u}
u_- (t, 0)  = B u_+(t, 0) \mbox{ for } t \ge 0,
\end{equation}
and the boundary controls  at $x = 1$
\begin{equation}\label{control-u}
u_+ = U(t) \mbox{ for } t \ge 0 \mbox{ where $U$ is the control.}
\end{equation}

In what follows, we discuss the idea of the proof of \Cref{thm1} and \Cref{thm3}. The proofs of \Cref{thm2} and \Cref{thm4} (given in \cite[Theorem 4]{CoronNg19-2}) are in the same spirit of these ones and not dealt with.


\subsection{On the proof of \Cref{thm1}} The idea of the proof is to derive the sufficient conditions for which the null-controllability holds. Using the characteristic method, these conditions will be written under the form $U + {\mathcal K} U = {\mathcal F}$ where ${\mathcal K}$ is an analytic compact operator with respect to $\lambda$ where $C$ is replaced by $\lambda C$. We then apply the Fredholm theory to obtain the conclusion. The process to derive the equation $U + {\mathcal K} U = {\mathcal F}$ is somehow involved. We refer the reader to \cite{CoronNg19} for the details.

\subsection{On the proof of \Cref{thm3}} As mentioned above the null-controllability of \eqref{Sys-1}, \eqref{bdry-0}, and \eqref{bdry-1} at the time $T$ is equivalent to the one \eqref{eq-u}-\eqref{control-u}  at the same time. The proof of the null-controllability of the later system  is based on the Hilbert uniqueness method given in the following result \cite[Lemma 1]{CoronNg19-2} whose proof is standard.

\begin{lemma}\label{lem-observability} Let $T>0$. System \eqref{eq-u}-\eqref{control-u} is null controllable at the time $T$ if and only if, for some positive constant $C$,
\begin{equation}\label{observability}
\int_{-T}^0 |v_+(t, 1)|^2 \, dt \ge C \int_0^1 |v(-T, x)|^2 \, dx \; \; \; \forall \, v \in [L^2(0, 1)]^n,
\end{equation}
where $v(\cdot, \cdot)$ is the unique solution of the system, for $(t, x)  \in (-\infty, 0) \times (0, 1)$,
\begin{equation}\label{eq-v-O}
\partial_t v (t, x) =  \Sigma(x) \partial_x v (t, x) + \Sigma'(x)  v(t, x),
\end{equation}
with, $t < 0$,
\begin{equation}\label{bdry-v0-O}
v_-(t, 1)  = 0,
\end{equation}
\begin{multline}\label{bdry-v-O}
\Sigma_+ (0) v_+(t, 0) = - B\tr  \Sigma_- (0) v_- (t, 0) \\
+ \int_0^1  S_{-+}\tr(x) v_- (t, x) + S_{++}\tr(x) v_+(t, x) \, dx,
\end{multline}
and
\begin{equation}\label{initial-v-O}
v(t = 0, \cdot) = v \mbox{ in } (0, 1).
\end{equation}
\end{lemma}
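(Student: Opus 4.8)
The plan is to carry out the standard duality (Hilbert uniqueness method) argument, being careful to identify the correct adjoint system. First I would fix a control $U \in [L^2(-T,0)]^m$ and an initial datum $u(-T,\cdot) = u_0 \in [L^2(0,1)]^n$, and let $u$ be the corresponding broad solution of \eqref{eq-u}, \eqref{bdry-u}, \eqref{control-u} on $(-T,0)\times(0,1)$ (well-posedness is already available by the results quoted in the excerpt). The map $u_0 \mapsto u(0,\cdot)$ is affine and continuous from $[L^2(0,1)]^n$ to $[L^2(0,1)]^n$, and null-controllability at time $T$ means exactly that for every $u_0$ there is a $U$ driving $u(0,\cdot)$ to $0$; by the closed-range / open-mapping characterization, this is equivalent to a coercivity estimate for the adjoint of the control-to-final-state map, i.e. the observability inequality \eqref{observability}. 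So the core of the proof is to compute that adjoint.

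The second and main step is the integration-by-parts identity linking $u$ and the backward solution $v$ of \eqref{eq-v-O}--\eqref{initial-v-O}. I would multiply \eqref{eq-u} by $v\tr$, integrate over $(-T,0)\times(0,1)$, and integrate by parts in both $t$ and $x$. The choice of $\Sigma'(x)v$ as the zeroth-order term in \eqref{eq-v-O} is dictated precisely so that the interior terms cancel: $\int v\tr(\partial_t u - \Sigma\partial_x u) + \int(\partial_t v + \partial_x(\Sigma v))\tr u$ collapses after noting $\partial_x(\Sigma v) = \Sigma'v + \Sigma\partial_x v$. What survives is boundary data at $t=-T$, $t=0$, $x=0$, $x=1$, together with the term coming from the nonlocal source $S(x)u(t,0)$, namely $\int_{-T}^0\!\!\int_0^1 v\tr(t,x)S(x)u(t,0)\,dx\,dt$. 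Using the block structure \eqref{cond-S} of $S$, this nonlocal term only pairs against $u_+(t,0) = U(t)\big|_{x=0}$... more precisely against $u_+(t,0)$, and I would move it into the boundary terms at $x=0$; this is exactly what forces the integral term $\int_0^1 S_{-+}\tr v_- + S_{++}\tr v_+\,dx$ to appear in the adjoint boundary condition \eqref{bdry-v-O}. Matching the coefficients at $x=0$ of $u_-(t,0)$ and $u_+(t,0)$, using \eqref{bdry-u} ($u_- = Bu_+$) on the primal side to eliminate $u_-(t,0)$, produces the condition \eqref{bdry-v-O} with the transpose $-B\tr\Sigma_-(0)v_-(t,0)$; matching at $x=1$, where the control $U$ lives in $u_+(1,\cdot)$ and $u_-(1,\cdot)$ is free, forces $v_-(t,1) = 0$, which is \eqref{bdry-v0-O}, and leaves the control pairing $\int_{-T}^0 v_+(t,1)\tr\,\Sigma_+(1)\,U(t)\,dt$ (a nonzero diagonal weight, absorbable); the $t=0$ and $t=-T$ boundary terms give $\int_0^1 v(0,x)\tr u(0,x)\,dx$ and $\int_0^1 v(-T,x)\tr u_0(x)\,dx$ respectively.

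The upshot of the identity is a duality pairing of the form $\langle u(0,\cdot),v(0,\cdot)\rangle - \langle u_0, v(-T,\cdot)\rangle = \langle U, \mathcal{L}v\rangle_{L^2(-T,0)}$ where $\mathcal{L}v = \Sigma_+(1)v_+(\cdot,1)$ is (up to the invertible diagonal weight $\Sigma_+(1)$) the observation operator. From here the argument is routine functional analysis: null-controllability at time $T$ is equivalent to surjectivity of $U \mapsto u(0,\cdot)$ for $u_0 = 0$ combined with reachability of the free evolution's final state, which by the Banach closed range theorem is equivalent to the a priori estimate $\|v(-T,\cdot)\|_{L^2} \le C\,\|\mathcal{L}v\|_{L^2(-T,0)}$ for all $v(0,\cdot) = v \in [L^2(0,1)]^n$; since $\Sigma_+(1)$ is bounded and boundedly invertible, this is \eqref{observability} up to renaming the constant.

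The step I expect to be most delicate is \emph{not} the abstract duality but the rigorous version of the integration by parts: the solutions are only broad ($L^2$) solutions, so the formal boundary-term manipulation must be justified either by a density argument from smooth data (using continuity of the solution maps and well-posedness of both the primal system \eqref{eq-u}--\eqref{control-u} and the adjoint \eqref{eq-v-O}--\eqref{initial-v-O} in the $L^2$ setting), or by working along characteristics where traces at $x=0$ and $x=1$ are well defined. One must also check that the adjoint system \eqref{eq-v-O}--\eqref{initial-v-O} is itself well-posed backward in time with the boundary conditions \eqref{bdry-v0-O}--\eqref{bdry-v-O} — the data split ($v_-$ prescribed at $x=1$, $v_+$ prescribed at $x=0$) is the correct one for the reversed characteristics, and the Volterra-type integral term in \eqref{bdry-v-O} is a compact perturbation that does not affect well-posedness. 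Once these technical points are in place, matching coefficients term by term in the boundary sum gives exactly \eqref{bdry-v0-O} and \eqref{bdry-v-O}, and the proof concludes. As the excerpt itself notes, this is standard, so I would keep the write-up brief and refer to \cite{CoronNg19-2} for the full details.
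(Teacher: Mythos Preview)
Your proposal is correct and follows exactly the approach the paper has in mind: the paper simply states that the lemma is the Hilbert uniqueness method characterization and that its proof is standard, giving no details beyond the reference to \cite{CoronNg19-2}. Your derivation of the adjoint system via integration by parts (with the $\Sigma' v$ term ensuring interior cancellation, the nonlocal $S(x)u(t,0)$ term producing the integral in \eqref{bdry-v-O}, and the matching at $x=0,1$ yielding \eqref{bdry-v0-O}--\eqref{bdry-v-O}) together with the closed-range/duality argument is precisely the standard proof the paper is alluding to, and your caveat about justifying the formal identity for broad $L^2$ solutions by density is the only genuine technical point---exactly as you note.
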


The next key part of the proof of \Cref{thm3} is the following compactness result for \eqref{eq-v-O}-\eqref{bdry-v-O} \cite[Lemma 4]{CoronNg19-2}; the structure of $S$ given in \eqref{cond-S-new} plays a role in the proof.

\begin{lemma}\label{lem-compact} Let $k \ge m \ge 1$, $B \in \cB$,  and $T \ge T_{opt}$. Assume that
$(v_N)$ be a sequence of solutions of \eqref{eq-v-O}-\eqref{bdry-v-O} $($with $v_N(0, \cdot)$ in $[L^2(0, 1)]^n$$)$ such that
\begin{gather}\label{thm2-generating-eq-a}
\sup_{N} \|v_N (-T, \cdot) \|_{L^2(0, 1)} < + \infty,
\\
\label{thm2-generating-eq-b}
\lim_{N \to + \infty} \|v_{N, +}(\cdot, 1) \|_{L^2(-T, 0)} = 0.
\end{gather}
We have, up to a subsequence,
\begin{equation}\label{thm2-claim1}
v_N(-T, \cdot) \mbox{ converges in } L^2(0, 1),
\end{equation}
and  the limit $V \in [L^2(0, 1)]^n$ satisfies the equation
\begin{equation}\label{thm2-claim2}
V =  {\mathcal K} V,
\end{equation}
for some compact operator $\mathcal{K}$ from  $[L^2(0, 1)]^n$ into itself. Moreover, $\cK$ depends only  on $\Sigma$, $S$, and $B$; in particular, $\cK$ is independent of $T$.
\end{lemma}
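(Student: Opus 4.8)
The plan is to combine the explicit characteristic representation of the broad solutions of \eqref{eq-v-O}--\eqref{bdry-v-O} with a weak--strong compactness argument. Since $\Sigma$ and $\Sigma'$ are diagonal, along its own characteristic curves each scalar component $v_i$ is merely multiplied by a positive Lipschitz factor coming from the $\Sigma' v$ term; hence $v_N$ is completely described, component by component, by transport plus reflections at $x=0$ and $x=1$. First I would record the consequences of $T\ge T_{opt}$: using $k\ge m$ one has $T_{opt}\ge\tau_k+\tau_{k+m}>\tau_i$ for $1\le i\le k$ and $T_{opt}\ge\tau_{k+1-m}+\tau_{k+1}>\tau_j$ for $k+1\le j\le n$. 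Therefore the characteristic issued from any point $(-T,x)$, $x\in(0,1)$, reaches $x=1$ (for a $v_-$-component) or $x=0$ (for a $v_+$-component) at some time still in $(-T,0)$. Combined with the homogeneous condition \eqref{bdry-v0-O}, this forces $v_{N,-}(-T,\cdot)\equiv 0$ for every $N$, so any limit $V$ automatically has $V_-=0$ and only $V_+$ is at stake; moreover each $v_{N,k+p}(-T,\cdot)$ is a transport of the boundary trace $v_{N,k+p}(\cdot,0)$ on $(-T,0)$.

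Next I would express this boundary trace through the nonlocal condition \eqref{bdry-v-O}: $v_{N,+}(\cdot,0)$ is an explicit combination of $v_{N,-}(\cdot,0)$ and of the domain integrals $\int_0^1 S_{-+}\tr v_{N,-}$ and $\int_0^1 S_{++}\tr v_{N,+}$, while the interior values and the trace $v_{N,-}(\cdot,0)$ are, by the method of characteristics, transports of the data at $t=-T$ (whose $-$-part vanishes by the previous step), of the trace $v_{N,+}(\cdot,1)$ (which tends to $0$ by \eqref{thm2-generating-eq-b}), and -- once more -- of $v_{N,+}(\cdot,0)$. Iterating this substitution a number of times that is bounded once $T\ge T_{opt}$ (each characteristic exits through $x=1$ or reaches $\{t=-T\}$ after finitely many reflections), I obtain a closed relation $v_N(-T,\cdot)=\cK\,v_N(-T,\cdot)+R_N$ with $\|R_N\|_{L^2(0,1)}\to 0$, where $\cK$ is assembled from the transport factors (depending only on $\Sigma$), the matrix $B$ and the kernels $S_{-+}\tr$, $S_{++}\tr$; the boundedness of the number of reflections is exactly what makes $\cK$ independent of $T$.

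The core point is to show that $\cK$ is compact. The only smoothing in the construction comes from the maps $v\mapsto\int_0^1 S_{\bullet\bullet}\tr(x)v(t,x)\,dx$, which, composed with the (bounded but non-compact) transport operators, are of Hilbert--Schmidt type on $[L^2(0,1)]^n$; hence one must verify that all the purely transport-type contributions cancel in $\cK$. This is where the structure \eqref{cond-S-new} of $S_{++}$ enters: since $S_{++}$ is strictly lower triangular, the $x=0$ trace of the $(k+p)$-th component depends, besides the domain integrals, only on the $v_-$-block (already reduced) and on the components $v_{k+q}$ with $q<p$; running an induction on $p=1,\dots,m$, at each step one only adds a compact correction, and the hypotheses $B\in\cB$ and $k\ge m$ guarantee that the linear algebra of this elimination is non-degenerate and leaves no transport-only self-loop. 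This yields $\cK$ compact and depending only on $\Sigma$, $S$ and $B$.

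Finally, by \eqref{thm2-generating-eq-a} the sequence $V_N:=v_N(-T,\cdot)$ is bounded in $[L^2(0,1)]^n$, so up to a subsequence $V_N\weakconverge V$; the compactness of $\cK$ gives $\cK V_N\to\cK V$ strongly, and then $V_N=\cK V_N+R_N\to\cK V$ strongly, which is \eqref{thm2-claim1}, the limit satisfying $V=\cK V$, i.e.\ \eqref{thm2-claim2}. I expect the main obstacle to be the middle part: organizing the characteristic bookkeeping so that the transport contributions cancel and only the Hilbert--Schmidt pieces survive -- which requires using simultaneously $T\ge T_{opt}$, the triangular structure \eqref{cond-S-new}, $B\in\cB$ and $k\ge m$ -- and then checking that the resulting operator is genuinely compact and $T$-independent.
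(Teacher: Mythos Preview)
Your overall strategy---use characteristics to write $v_N(-T,\cdot)=\cK\,v_N(-T,\cdot)+R_N$, get compactness of $\cK$ from the integral terms in \eqref{bdry-v-O}, then pass weak convergence to strong via $\cK$---is the right shape, and your identification of the strictly upper-triangular structure \eqref{cond-S-new} as the reason the transport-only self-loops disappear matches the hint given in the paper (which does not prove the lemma here but cites \cite[Lemma~4]{CoronNg19-2} and notes precisely that ``the structure of $S$ given in \eqref{cond-S-new} plays a role'').

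There is, however, a genuine error early in your argument. You assert that $T_{opt}\ge\tau_k+\tau_{k+m}>\tau_i$ for every $1\le i\le k$, and from this conclude $v_{N,-}(-T,\cdot)\equiv0$. The inequality is false: since $-\lambda_1<\cdots<-\lambda_k$, one has $\tau_1>\cdots>\tau_k$, so $\tau_k$ is the \emph{smallest} of the $\tau_i$ with $i\le k$, and $\tau_k+\tau_{k+m}>\tau_1$ need not hold. Concretely, with $k=2$, $m=1$ and (constant) $\lambda_1=1/10$, $\lambda_2=\lambda_3=1$, one gets $\tau_1=10$, $T_{opt}=\tau_2+\tau_3=2$, so $T_{opt}<\tau_1$. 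For $x$ near $0$ the forward $v_1$-characteristic from $(-T,x)$ hits $t=0$ before $x=1$, and $v_{N,1}(-T,x)$ is determined by the (arbitrary) initial datum $v_N(0,\cdot)$, not by the homogeneous condition \eqref{bdry-v0-O}. Hence $v_{N,-}(-T,\cdot)\not\equiv0$ in general, and the simplification ``$V_-=0$, only $V_+$ is at stake'' collapses. More generally, for $k>m$ the indices $1,\dots,k-m$ never appear in the definition \eqref{def-Top} of $T_{opt}$, so there is no control of $\tau_1,\dots,\tau_{k-m}$ by $T_{opt}$.

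This means your reduction must keep track of the $v_-$-block. The correct bookkeeping writes each $v_{N,i}(-T,x)$ (also for $i\le k$) as a sum of a term transported from $\{t=0\}$ (which may be nonzero), terms coming from $v_{N,+}(\cdot,1)$ (which go to $0$), and terms produced by reflections through \eqref{bdry-v-O}; the latter bring in the integral kernels $S_{-+}\tr$, $S_{++}\tr$ and are where compactness is generated. The condition $B\in\cB$ and the triangular structure \eqref{cond-S-new} then ensure that, after finitely many reflections bounded independently of $T\ge T_{opt}$, the only surviving self-interactions pass through these integral operators---but you must carry the $-$-components through this induction rather than discarding them at the outset.
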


\begin{proof}[Proof of \Cref{thm3}] For $T> T_{opt}$,  set
\begin{multline}\label{thm2-def-YT}
Y_T := \Big\{ V \in L^2(0, 1): V \mbox{ is the limit in $L^2(0, 1)$ of } \\[6pt]
\mbox{some subsequence of solutions $\big(v_N(-T, \cdot) \big)$ }\\[6pt]  \mbox{of  \eqref{eq-v-O}-\eqref{bdry-v-O} such that
 \eqref{thm2-generating-eq-a} and \eqref{thm2-generating-eq-b} hold}\Big\}.
\end{multline}
It is clear that $Y_T$ is a vectorial space. Moreover, by \eqref{thm2-claim2} and the compact property of $\cK$, we have
\begin{equation}\label{thm2-pro-YT-1}
\dim Y_T \le  C,
\end{equation}
for some positive constant $C$ independent of $T$.

We next show that
\begin{equation}\label{thm2-pro-YT-2}
Y_{T_2} \subset Y_{T_1} \mbox{ for }  T_{opt} < T_1 < T_2.
\end{equation}
Indeed,  let $V \in Y_{T_2}$. There exists a sequence of solutions $(v_N)$  of \eqref{eq-v-O}-\eqref{bdry-v-O} such that
\begin{equation}
\left\{\begin{array}{c}
v_N (-T, \cdot) \to V  \mbox{ in } L^2(0, 1),  \\[6pt]
\lim_{N \to + \infty} \|v_{N, +}(\cdot, 1) \|_{L^2(-T_2, 0)} = 0.
\end{array} \right.
\end{equation}
By considering the sequence $v_N(\cdot - \tau, \cdot)$ with $\tau = T_2 - T_1$, we derive that $V \in Y_{T_1}$.

The arguments are then in the spirit of \cite{BLR92} (see also \cite{Rosier97})  via an eigenvalue problem in finite dimension using a contradiction argument.   By \Cref{lem-observability}, to obtain the null-controllability at the time $T > T_{opt}$, it suffices to  prove \eqref{observability} by contradiction. Assume that  there exists a sequence of solutions $(v_N)$ of  \eqref{eq-v-O}-\eqref{bdry-v-O} such that
\begin{equation}\label{observability-contradiction}
N \int_{-T}^0 |v_{N, +}(t, 1)|^2 \, dt \le \int_0^1 |v_N(-T, x)|^2 \, dx =1.
\end{equation}
By \eqref{thm2-claim1}, up to a subsequence, $v_N(-T, \cdot)$ converges in $L^2(0, 1)$ to a limit $V$. It is clear that $\|V \|_{L^2(0, 1)} = 1$; in particular, $V \neq 0$. Consequently,
\begin{equation}\label{thm2-pro-YT-3}
Y_T \neq \{ 0\}.
\end{equation}

By \eqref{thm2-pro-YT-1}, \eqref{thm2-pro-YT-2},  and \eqref{thm2-pro-YT-3}, there exist $T_{opt} < T_{1} < T_2 < T$ such that
\begin{equation*}
\dim Y_{T_1} = \dim Y_{T_2} \neq 0.
\end{equation*}

We can prove  that, for $V \in Y_{T_1}$,
\begin{equation}\label{thm2-claimB}
 \Sigma \partial_x V + \Sigma' V  \mbox{ is an element in } Y_{T_1}.
\end{equation}
Recall that $Y_{T_1}$ is real and  of finite dimension. Consider its natural extension as a complex vectorial space and still denote its extension by $Y_{T_1}$. Define
\begin{equation*}
\begin{array}{cccc}
\cA: & Y_{T_1} & \to  & Y_{T_1} \\[6pt]
& V & \mapsto & \Sigma \partial_x V + \Sigma' V.
\end{array}
\end{equation*}
From the definition of $Y_{T_1}$, it is clear that, for $V \in Y_{T_1}$,
\begin{equation}\label{thm2-bdry-V0}
V_-(1)  = 0
\end{equation}
and
\begin{multline}\label{thm2-bdry-V}
\Sigma_+ (0) V_+(0) = - B\tr  \Sigma_- (0) V_- (0) \\
 + \int_0^1  S_{-+}\tr(x) V_- (x) + S_{++}\tr(x) V_+(x) \, dx.
\end{multline}
Since $Y_{T_1} \neq \{0 \}$ and $Y_{T_1}$ is of finite dimension, there exists $\lambda \in \mC$ and $V \in Y_{T_1} \setminus \{0 \}$ such that
\begin{equation*}
\cA V = \lambda V.
\end{equation*}
Set
$$
v(t, x) = e^{\lambda t} V(x) \mbox{ in } (-\infty, 0) \times (0, 1).
$$
Using \eqref{thm2-bdry-V0} and \eqref{thm2-bdry-V}, one can verify that $v(t, x)$ satisfies \eqref{eq-v-O}-\eqref{bdry-v-O}.
Applying the characteristic method, one can deduce that  $v(t, \cdot ) =  0 $ in $(0, 1)$ for $t < - \tau_{k+1} - \cdots - \tau_{k+m} $. It follows that $V = 0$ which contradicts the fact $V \neq 0$.
Thus \eqref{observability} holds and the null-controllability  is valid for $T > T_{opt}$. The details can be found in \cite{CoronNg19-2}.
\end{proof}

\section{On the proof of \Cref{thm5}}

We will also deal with the nonlinear boundary condition at $x = 0$ as mentioned in \Cref{rem-NL}. We first consider the case $m > k$.
 Consider the last equation of \eqref{bdry-0} and  impose the condition $w_{k}(t, 0) = 0$. Using \eqref{cond-B-1} with $i = 1$ and the implicit function theorem, one can then write the last  equation of \eqref{bdry-0} under the form
\begin{equation}\label{kd-1}
w_{m+k}(t, 0) = M_k  \Big( w_{k +1 }(t, 0), \cdots, w_{m + k - 1} (t, 0) \Big),
\end{equation}
for some $C^2$ nonlinear map  $M_k$ from $U_k$ into $\mR$ for some neighborhood $U_k$ of $0 \in \mR^{m-1}$ with $M_k(0) = 0$ provided that $|w_{+}(t, 0)|$ is sufficiently small.

Consider the last two equations of \eqref{bdry-0} and impose the condition $w_{k}(t, 0) = w_{k-1}(t, 0) = 0$. Using \eqref{cond-B-1} with $i = 2$ and  the Gaussian elimination approach,  one can then write these two equations under the form \eqref{kd-1} and
\begin{equation}\label{kd-2}
w_{m+k-1}(t, 0) = M_{k-1} \Big( w_{k +1 }(t, 0), \cdots, w_{m + k - 2} (t, 0) \Big),
\end{equation}
for some $C^2$ nonlinear map  $M_{k-1}$ from $U_{k-1}$ into $\mR$ for some neighborhood $U_{k-1}$ of $0 \in \mR^{m-2}$ with $M_{k-1}(0) = 0$ provided that $|w_{+}(t, 0)|$ is sufficiently small, etc. Finally, consider the  $k$ equations of \eqref{bdry-0} and impose the condition $w_{k}(t, 0) = \dots = w_{1}(t, 0) = 0$. Using \eqref{cond-B-1} with $i = k$ and the Gaussian elimination approach,  one can then write these $k$ equations under the form \eqref{kd-1}, \eqref{kd-2}, \dots, and \begin{equation}\label{kd-k}
w_{m+1}(t, 0) = M_{1} \Big(w_{k +1 }(t, 0), \cdots, w_{m} (t, 0) \Big),
\end{equation}
for some $C^2$ nonlinear map  $M_1$ from $U_1$ into $\mR$ for some neighborhood $U_1$ of $0 \in \mR^{m-k}$ with $M_1(0) = 0$ provided that $|w_{+}(t, 0)|$ is sufficiently small. These nonlinear maps  $M_1, \dots, M_k$ will be used in the construction of feedbacks.

Define
$$
\frac{d}{dt}x_j(t, s, \xi) =   \lambda_j \Big(x_j(t, s, \xi), w \big(t, x_j(t, s, \xi) \big) \Big)
$$
with  $x_j(s, s, \xi) = \xi$ for  $1 \le j \le k$, and
$$
\frac{d}{dt}x_j(t, s, \xi) = -  \lambda_j \Big(x_j(t, s, \xi), w \big(t, x_j(t, s, \xi) \big) \Big),
$$
with  $x_j(s, s, \xi) = \xi$ for  $k+1 \le j \le k + m$. We do not precise at this stage the domain of the definition of $x_j$. Later, we only consider the flows in the regions where the solution $w$ is well-defined.

To arrange the compatibility of our controls, we also  introduce following auxiliary variables
satisfying autonomous dynamics.  Set $\delta = T- T_{opt} > 0$. For $t \ge 0$, define, for $k+1 \le j \le k+m$,
\begin{equation}
\zeta_{j}(0) = w_{0, j}(0), \;  \zeta_j'(0) = \lambda_j\big(0, w_0(0)\big) w_{0, j}'(0),
\end{equation}
and
\begin{equation}
\eta_j(0) = 1, \quad \eta_j'(0)=0,
\end{equation}
$$
 \zeta_{j}(t) = \eta_{j}(t) = 0 \mbox{ for } t \ge \delta/2,
$$
The feedback is then chosen as follows:
\begin{multline}\label{bdry-1-NL}
w_{m+ k}(t, 1) =  \zeta_{m+k}(t)  \\[6pt]+ ( 1 - \eta_{m+k}(t)) M_{k} \Big(w_{k+ 1}\big(t, x_{k+1} (t, t+ t_{m+k},  0) \big), \dots, \\w_{k+ m-1}\big(t, x_{k+ m - 1} (t, t+  t_{m + k }, 0) \big)\Big)
\end{multline}
\begin{multline}\label{bdry-2-NL}
w_{m+ k - 1}(t, 1) = \zeta_{m+k-1}(t)  \\[6pt]
+  ( 1- \eta_{m+k-1}(t)) M_{k-1} \Big(w_{k+ 1}\big(t, x_{k+1} (t, t+ t_{m + k - 1}, 0) \big), \dots, \\
w_{k+ m-2}\big(t, x_{k+ m - 2} (t, t+  t_{m + k -1}, 0) \big) \Big)
\end{multline}
\dots
\begin{multline}\label{bdry-m-NL}
w_{m+ 1}(t, 1) =  \zeta_{m+1}(t) \\[6pt]
+ (1 -  \eta_{m+1}(t))  M_{1} \Big(w_{k+ 1} \big(t, x_{k+1} (t, t + t_{m + 1}, 0) \big), \dots, \\
w_{m}\big(t, x_{m} (t, t+  t_{m +1}, 0) \big)\Big)
\end{multline}
and,  for $k+1 \le j \le m$,
\begin{equation}\label{bdry-k-m-NL}
w_{j}(t, 1) = \zeta_j(t),
\end{equation}

This feedback is well-determined by noting that \eqref{bdry-1-NL} depends only on the current state, \eqref{bdry-2-NL} depends only on the current state and \eqref{bdry-1-NL}, etc.

For $m \le k$,  the  feedback law is given as follows:
\begin{multline*}
w_{m+ k}(t, 1) =  \zeta_{m+k}(t)  \\
+  (1 - \eta_{m+k}(t)) M_{k} \Big(w_{k+ 1}\big(t, x_{k+1} (t, t+ t_{m+k},  0) \big), \dots, \\
w_{k+ m-1}\big(t, x_{k+ m - 1} (t, t+  t_{m + k }, 0) \big)\Big),
\end{multline*}
\dots
\begin{multline*}
w_{k+2}(t, 1) = \zeta_{k+2}(t)  \\
+  ( 1-\eta_{k+2}(t))  M_{2} \Big(w_{k+ 1}\big(t, x_{k+1} (t, t+ t_{k+2}, 0) \big) \Big),
\end{multline*}
and
$$
w_{k+1}(t, 1) = \zeta_{k+1}(t).
$$

The null-controllability for small initial data can be derived from the properties of $M_j$ for $1 \le j \le k$. An key technical part of the proof is the well-posedness for \eqref{Sys-1-QL} and  \eqref{bdry-0}  equipped these feedback laws, see \cite[Lemma 2.2]{CoronNg20}.

\medskip
\noindent{\bf Conclusion:} This paper is devoted to the null-controllability, exact
controllability, and stabilization of hyperbolic systems for the optimal time. The
starting point of the analysis in the inhomogeneous case is based on the backstepping approach.
The ideas of the analysis are  presented.

\providecommand{\bysame}{\leavevmode\hbox to3em{\hrulefill}\thinspace}
\providecommand{\MR}{\relax\ifhmode\unskip\space\fi MR }
\providecommand{\MRhref}[2]{%
  \href{http://www.ams.org/mathscinet-getitem?mr=#1}{#2}
}
\providecommand{\href}[2]{#2}

\end{document}